\providecommand{\U}[1]{\protect\rule{.1in}{.1in}}
\newtheorem{theorem}{Theorem}
\theoremstyle{plain}
\newtheorem{acknowledgement}{Acknowledgement}
\newtheorem{corollary}{Corollary}
\newtheorem{lemma}{Lemma}
\newtheorem{proposition}{Proposition}
\newtheorem{remark}{Remark}
\numberwithin{equation}{section}
\begin{document}
\title[Maximal Covariance]{Maximal Covariance Group of Wigner Transforms and Pseudo-Differential Operators}
\author{Nuno Costa Dias}
\address[A. One and A. Three]{Departamento de Matem\'{a}tica. Universidade
Lus\'{o}fona. Av. Campo Grande, 376, 1749-024 Lisboa}
\email[A. One]{ncdias@meo.pt}
\urladdr{}
\author{Maurice A. de Gosson}
\address[A. Two]{University of Vienna, Faculty of Mathematics, NuHAG\\
Nordbergstrasse 15, A-1090 Vienna}
\email[A.~Two]{maurice.de.gosson@univie.ac.at}
\urladdr{}
\author{Jo\~{a}o Nuno Prata}
\email[A.~Three]{joao.prata@mail.telepac.pt }
\urladdr{}
\thanks{}
\thanks{}
\thanks{This paper is in final form and no version of it will be submitted for
publication elsewhere.}
\date{}
\subjclass[2000]{ Primary 35S99, 35P05, 53D05; Secondary 35S05, 45A75}
\keywords{Wigner transform, symplectic covariance, Weyl operator}
\dedicatory{ }
\begin{abstract}
We show that the linear symplectic and anti-symplectic transformations form
the maximal covariance group for both the Wigner transform and Weyl operators.
The proof is based on a new result from symplectic geometry which
characterizes symplectic and anti-symplectic matrices, and which allows us, in
addition, to refine a classical result on the preservation of symplectic
capacities of ellipsoids.

\end{abstract}
\maketitle

MSC [2000]:\ Primary 35S99, 35P05, 53D05; Secondary 35S05, 45A75

\section*{Introduction}

It is well known \cite{Folland,Birk,Birkbis,Wong} that the Wigner transform
\begin{equation}
W\psi(x,p)=\left(  \tfrac{1}{2\pi\hbar}\right)  ^{n}\int_{\mathbb{R}^{n}%
}e^{-\frac{i}{\hbar}p\cdot y}\psi(x+\tfrac{1}{2}y)\overline{\psi}(x-\tfrac
{1}{2}y)dy \label{wigdef}%
\end{equation}
of a function $\psi\in\mathcal{S}(\mathbb{R}^{n})$ has the following
\textit{symplectic covariance }property: let $S$ be a linear symplectic
automorphism of $\mathbb{R}^{2n}$ (equipped with its standard symplectic
structure) and $\widehat{S}$ one of the two metaplectic operators covering
$S$; then
\begin{equation}
W\psi\circ S=W(\widehat{S}^{-1}\psi). \label{wigco2}%
\end{equation}
It has been a long-standing question whether this property can be generalized
in some way to arbitrary non-linear symplectomorphisms (the question actually
harks back to the early days of quantum mechanics, following a question of
Dirac \cite{Dirac,dragt}). In a recent paper \cite{JPDOA} one of us has shown
that one cannot expect to find an operator $\widehat{F}$ (unitary, or not)
such that $W\psi\circ F^{-1}=W(\widehat{F}\psi)$ for all $\psi\in
\mathcal{S}^{\prime}(\mathbb{R}^{n})$ when $F\in\operatorname*{Symp}(n)$ (the
group of all symplectomorphisms of the standard symplectic space) unless $F$
is linear (or affine). In this paper we show that one cannot expect to have
covariance for arbitrary linear automorphisms of $\mathbb{R}^{2n}$. More
specifically: fix $M\in GL(2n,\mathbb{R})$, and suppose that for any $\psi
\in\mathcal{S}^{\prime}(\mathbb{R}^{n})$ there exists $\psi^{\prime}%
\in\mathcal{S}^{\prime}(\mathbb{R}^{n})$ such that
\[
W\psi\circ M=W\psi^{\prime}%
\]
then $M$ is either symplectic, or antisymplectic (\textit{i.e.} $MC$ is
symplectic, where $C$ is the reflection $(x,p)\longmapsto(x,-p)$ ) (Theorem
\ref{Theo1}).

The covariance property (\ref{wigco2}) is intimately related to the following
property of Weyl operators: assume that $\widehat{A}$ is a continuous linear
operator $\mathcal{S}(\mathbb{R}^{n})\longrightarrow\mathcal{S}^{\prime
}(\mathbb{R}^{n})$ with Weyl symbol $a$; writing this correspondence
$\widehat{A}\overset{\mathrm{Weyl}}{\longleftrightarrow}a$ we then have
\begin{equation}
\widehat{S}^{-1}\widehat{A}\widehat{S}\overset{\mathrm{Weyl}}%
{\longleftrightarrow}a\circ S \label{weylco1}%
\end{equation}
for every symplectic automorphism $S$. Properties (\ref{wigco2}) and
(\ref{weylco1}) are in fact easily deduced from one another. One shows
\cite{Stein,Wong} that property (\ref{weylco1}) is really characteristic of
Weyl calculus: it is the only pseudo-differential calculus enjoying this
symplectic covariance property (however, see \cite{transam} for partial
covariance results for Shubin operators). We will see, as a consequence of our
study of the Wigner function, that one cannot extend property (\ref{weylco1})
to non-symplectic automorphisms. More precisely, if $S$ is not a symplectic or
antisymplectic matrix, then there exists no unitary operator $\widehat{S}$
such that (\ref{weylco1}) holds for all $\widehat{A}\overset{\mathrm{Weyl}%
}{\longleftrightarrow}a$. In other words, the group of linear symplectic and
antisymplectic transformations is the maximal covariance group for the
Weyl--Wigner calculus.

It turns out that the methods we use allow us in addition to substantially
improve a result from symplectic topology. Recall that a symplectic capacity
on $\mathbb{R}^{2n}$ is a mapping $c$ associating to every subset
$\Omega\subset\mathbb{R}^{2n}$ a nonnegative number, or $+\infty$, and
satisfying the following properties:

\begin{itemize}
\item \emph{Symplectic invariance: }$c(f(\Omega))=c(\Omega)$\ \textit{if }
$f\in\operatorname*{Symp}(n)$\emph{;}

\item \emph{Monotonicity: }$\Omega\subset\Omega^{\prime}\Longrightarrow
c(\Omega)\leq c(\Omega^{\prime})$ \textit{ }

\item \emph{Conformality: }$c(\lambda\Omega)=\lambda^{2}c(\Omega)$
\ \textit{for every} $\lambda\in\mathbb{R}$

\item \emph{Nontriviality and normalization:}
\[
c(B^{2n}(R))=\pi R^{2}=c(Z_{j}^{2n}(R))
\]
where $B^{2n}(R)$ is the ball $|z|\leq R$ and $Z_{j}^{2n}(R)$ is the cylinder
$x_{j}^{2}+p_{j}^{2}\leq R^{2}$.
\end{itemize}

An important property is that all symplectic capacities agree on ellipsoids in
$\mathbb{R}^{2n}$. Now, a well known result is that if $f\in GL(2n,\mathbb{R}%
)$ preserves the symplectic capacity of \emph{all} ellipsoids in
$\mathbb{R}^{2n}$ then $f$ is either symplectic or antisymplectic (the notion
will be defined below). It turns out that our Lemma \ref{Lemma} which we use
to prove our main results about covariance yields the following sharper result
(Proposition \ref{prop1}): if $f$ preserves the symplectic capacity of all
symplectic balls, then $f$ is either symplectic or antisymplectic (a
symplectic ball is an ellipsoid which is the image of $B^{2n}(R)$ by a linear
symplectic automorphism; see section \ref{seclemma}).

\noindent\textbf{Notation and Terminology.} The standard symplectic form on
$\mathbb{R}^{2n}\equiv T^{\ast}\mathbb{R}^{n}$ is defined by $\sigma
(z,z^{\prime})=p\cdot x^{\prime}-p^{\prime}\cdot x$ if $z=(x,p)$, $z^{\prime
}=(x^{\prime},p^{\prime})$. An automorphism $S$ of $\mathbb{R}^{2n}$ is
symplectic if $\sigma(Sz,Sz^{\prime})=\sigma(z,z^{\prime})$ for all
$z,z^{\prime}\in\mathbb{R}^{2n}$. These automorphisms form a group
$\operatorname*{Sp}(n)$ (the standard symplectic group). The metaplectic group
$\operatorname*{Mp}(n)$ is a group of unitary operators on $L^{2}%
(\mathbb{R}^{n})$ isomorphic to the double cover $\operatorname*{Sp}_{2}(n)$
of the symplectic group. The standard symplectic matrix is $J=%
\begin{pmatrix}
0 & I\\
-I & 0
\end{pmatrix}
$ where $I$ (resp. $0$) is the $n\times n$ identity (resp. zero) matrix. We
have $\sigma(z,z^{\prime})=Jz\cdot z^{\prime}=(z^{\prime})^{T}Jz$ and
$S\in\operatorname*{Sp}(n)$ if and only if $S^{T}JS=J$ (or, equivalently,
$SJS^{T}=J$).

\section{A Result About Symplectic Matrices}

\subsection{Two symplectic diagonalization results}

We denote by $\operatorname*{Sp}^{+}(n)$ the subset of $\operatorname*{Sp}(n)$
consisting of symmetric positive definite symplectic matrices. We recall that
if $G\in\operatorname*{Sp}^{+}(n)$ then $G^{\alpha}\in\operatorname*{Sp}%
^{+}(n)$ for every $\alpha\in\mathbb{R}$. We also recall that the unitary
group $U(n,\mathbb{C})$ is identified with the subgroup
\begin{equation}
U(n)=\operatorname*{Sp}(n)\cap O(2n,\mathbb{R}) \label{un}%
\end{equation}
of $\operatorname*{Sp}(n)$ by the embedding%
\[
A+iB\longrightarrow%
\begin{pmatrix}
A & -B\\
B & A
\end{pmatrix}
.
\]

Recall \cite{Folland,Birk,HZ} that if $G\in\operatorname*{Sp}^{+}(n)$ then
there exists $U\in U(n)$ such that
\begin{equation}
G=U^{T}%
\begin{pmatrix}
\Lambda & 0\\
0 & \Lambda^{-1}%
\end{pmatrix}
U \label{gut}%
\end{equation}
where $\Lambda$ is the diagonal matrix whose diagonal elements are the $n$
eigenvalues $\geq1$ of $G$ (counting the multiplicities).

For further use we also recall the following classical result: let $N$ be a
(real) symmetric positive definite $2n\times2n$ matrix; then there exists
$S\in\operatorname*{Sp}(n)$ such that
\begin{equation}
S^{T}NS=%
\begin{pmatrix}
\Sigma & 0\\
0 & \Sigma
\end{pmatrix}
\label{william}%
\end{equation}
where $\Sigma$ is the diagonal matrix whose diagonal entries are the
symplectic eigenvalues of $N$, \textit{i.e.} the moduli of the eigenvalues
$\pm i\lambda$ ($\lambda>0$) of the product $JN$ (\textquotedblleft Williamson
diagonalization theorem\textquotedblright\ \cite{Will}; see
\cite{Folland,Birk,Birkbis,HZ} for proofs).

\subsection{A lemma, and its consequence\label{seclemma}}

Recall that an automorphism $M$ of $\mathbb{R}^{2n}$ is antisymplectic if
$\sigma(Mz,Mz^{\prime})=-\sigma(z,z^{\prime})$ for all $z,z^{\prime}%
\in\mathbb{R}^{2n}$; in matrix notation $M^{T}JM=-J$. Equivalently
$CM\in\operatorname*{Sp}(n)$ where $C=%
\begin{pmatrix}
I & 0\\
0 & -I
\end{pmatrix}
$.

\begin{lemma}
\label{Lemma}Let $M\in GL(2n,\mathbb{R})$ and assume that $M^{T}%
GM\in\operatorname*{Sp}(n)$ for every
\begin{equation}
G=%
\begin{pmatrix}
X & 0\\
0 & X^{-1}%
\end{pmatrix}
\in\operatorname*{Sp}\nolimits^{+}(n). \label{glambda}%
\end{equation}
Then $M$ is either symplectic, or anti-symplectic.
\end{lemma}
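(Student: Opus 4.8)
The plan is to translate the hypothesis into a single matrix identity constraining $A:=MJM^{T}$ and then exploit the freedom in the choice of $X$. First I would record that any block matrix $G=\begin{pmatrix} X & 0\\ 0 & X^{-1}\end{pmatrix}$ with $X$ symmetric positive definite actually lies in $\operatorname{Sp}^{+}(n)$: symmetry and positive definiteness are immediate, and a direct computation gives $GJG=J$, so $G$ is symplectic. Hence the hypothesis applies to all such $G$.

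Next, since $N:=M^{T}GM$ is symmetric, the symplectic condition $N^{T}JN=J$ reads $NJN=J$. Expanding and setting $A=MJM^{T}$ and $B=(M^{-1})^{T}JM^{-1}$, this becomes $GAG=B$ for every admissible $G$. Taking $X=I$ (so $G=I$) yields $A=B$, and therefore $GAG=A$ for all admissible $G$. I would then extract the structure of $A$. Observe that $A$ is antisymmetric, so write $A=\begin{pmatrix} a & b\\ -b^{T} & d\end{pmatrix}$ with $a,d$ antisymmetric. Computing $GAG$ and matching blocks against $A$ gives $XaX=a$, $XbX^{-1}=b$ and $X^{-1}dX^{-1}=d$ for all symmetric positive definite $X$. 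Choosing $X=tI$ with $t\neq1$ forces $a=d=0$. The middle relation says $Xb=bX$ for every such $X$; since $I+sS$ is positive definite for small $s$ and any symmetric $S$, the matrix $b$ commutes with all symmetric matrices and hence is a scalar multiple of the identity. Therefore $A=\lambda J$, that is, $MJM^{T}=\lambda J$ with $\lambda\neq0$.

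Finally I would pin down $\lambda$ using the equality $A=B$. The relation $PJP^{T}=cJ$ is equivalent to $P^{T}JP=cJ$ (invert and use $J^{-1}=-J$), so $MJM^{T}=\lambda J$ yields $M^{T}JM=\lambda J$ and likewise $B=(M^{-1})^{T}JM^{-1}=\lambda^{-1}J$. The identity $A=B$ then forces $\lambda=\lambda^{-1}$, i.e. $\lambda=\pm1$. For $\lambda=1$ we obtain $M^{T}JM=J$, so $M$ is symplectic, while for $\lambda=-1$ we obtain $M^{T}JM=-J$, so $M$ is antisymplectic.

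The hard part will be the conformal-symplectic bookkeeping in the last step: one must carefully distinguish the four expressions $MJM^{T}$, $M^{T}JM$, $(M^{-1})^{T}JM^{-1}$ and $M^{-1}J(M^{-1})^{T}$, and it is precisely the equality $A=B$ coming from the choice $G=I$ — not merely the conformal relation $MJM^{T}=\lambda J$, which alone only gives $\lambda^{2n}=\det(M)^{2}$ — that rules out $|\lambda|\neq1$ and closes the argument.
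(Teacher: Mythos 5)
Your proof is correct, and it takes a genuinely different route from the paper's. The paper proves the lemma by polar decomposition: writing $M=HP$ with $P=(M^{T}M)^{1/2}\in\operatorname{Sp}^{+}(n)$ (using the case $G=I$), it reduces to the orthogonal factor $H$, then invokes the unitary diagonalization $G=U^{T}\operatorname{diag}(\Lambda,\Lambda^{-1})U$ of positive symplectic matrices to show that $R=HU^{T}$ must be block-diagonal orthogonal with $A^{2}=D^{2}=I$, and finally uses the special matrices $X^{(ij)}=I+\tfrac12 E^{(ij)}$ to force $AD=\pm I$. You instead bypass the polar decomposition and all spectral theory: exploiting the symmetry of $N=M^{T}GM$ to rewrite $N^{T}JN=J$ as $GAG=B$ with $A=MJM^{T}$ and $B=(M^{-1})^{T}JM^{-1}$, you reduce everything to the single functional identity $GAG=A$, whose block analysis (scalar $X$ kills the diagonal blocks, perturbations $X=I+sS$ force the off-diagonal block to be scalar) yields $MJM^{T}=\lambda J$, and the normalization $A=B$ from $G=I$ pins down $\lambda=\pm1$. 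Each step checks out, including the equivalence $PJP^{T}=cJ\Leftrightarrow P^{T}JP=cJ$ and your closing observation that the conformal relation alone cannot exclude dilations — it is indeed the $G=I$ instance that does that work, just as in the paper it is the source of $P\in\operatorname{Sp}^{+}(n)$. What the paper's argument buys is the explicit intermediate factorization $M=RUP$ with $UP$ symplectic and $R$ orthogonal block-diagonal, which ties the lemma to the diagonalization results recalled beforehand; what yours buys is economy — it needs no polar decomposition, no square roots of positive symplectic matrices, and no eigenvalue bookkeeping, and it identifies the conformal-symplectic structure $MJM^{T}=\lambda J$ as the natural invariant underlying the dichotomy.
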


\begin{proof}
We first remark that, taking $G=I$ in the condition $M^{T}GM\in
\operatorname*{Sp}(n)$, we have $M^{T}M\in\operatorname*{Sp}(n)$. Next, we can
write $M=HP$ where $H=M(M^{T}M)^{-1/2}$ is orthogonal and $P=(M^{T}M)^{1/2}%
\in\operatorname*{Sp}^{+}(n)$ (polar decomposition theorem). It follows that
the condition $M^{T}GM\in\operatorname*{Sp}(n)$ is equivalent to
$P(H^{T}GH)P\in\operatorname*{Sp}(n)$; since $P$ is symplectic so is $P^{-1}$
and hence $H^{T} G H \in Sp(n)$ for all $G$ of the form (\ref{glambda}).

Let us now make the following particular choice for $G$: it is any diagonal
matrix%
\[
G=%
\begin{pmatrix}
\Lambda & 0\\
0 & \Lambda^{-1}%
\end{pmatrix}
\text{ \ , \ }\Lambda=\operatorname{diag}(\lambda_{1},...,\lambda_{n})
\]
with $\lambda_{j}>0$ for $1\leq j\leq n$. We thus have%
\[
H^{T}%
\begin{pmatrix}
\Lambda & 0\\
0 & \Lambda^{-1}%
\end{pmatrix}
H\in\operatorname*{Sp}\nolimits^{+}(n)
\]
for every $\Lambda$ of this form. Let $U\in U(n)$ be such that%
\[
H^{T}%
\begin{pmatrix}
\Lambda & 0\\
0 & \Lambda^{-1}%
\end{pmatrix}
H=U^{T}%
\begin{pmatrix}
\Lambda & 0\\
0 & \Lambda^{-1}%
\end{pmatrix}
U.
\]
($H^{T}GH\in\operatorname*{Sp}^{+}(n)$ and the eigenvalues of $H^{T}GH$ are
those of $G$ since $H$ is orthogonal) and set $R=HU^{T}$; the equality above
is equivalent to%
\begin{equation}%
\begin{pmatrix}
\Lambda & 0\\
0 & \Lambda^{-1}%
\end{pmatrix}
R=R%
\begin{pmatrix}
\Lambda & 0\\
0 & \Lambda^{-1}%
\end{pmatrix}
. \label{garag}%
\end{equation}
Writing $R=%
\begin{pmatrix}
A & B\\
C & D
\end{pmatrix}
$ we get the conditions%
\begin{align*}
\Lambda A  &  =A\Lambda\text{ , \ }\Lambda B=B\Lambda^{-1}\\
\Lambda^{-1}C  &  =C\Lambda\text{ \ , \ }\Lambda^{-1}D=D\Lambda^{-1}.
\end{align*}
for all $\Lambda$. It follows from these conditions that $A$ and $D$ must
themselves be diagonal $A=\operatorname{diag}(a_{1},...,a_{n})$,
$D=\operatorname{diag}(d_{1},...,d_{n})$. On the other hand, choosing
$\Lambda=\lambda I$, $\lambda\neq1$, we get $B=C=0$. Hence, taking into
account the fact that $R\in O(2n,\mathbb{R})$ we must have%
\begin{equation}
R=%
\begin{pmatrix}
A & 0\\
0 & D
\end{pmatrix}
\text{ \ , \ }A^{2}=D^{2}=I; \label{MatrixR}%
\end{equation}
Conversely, if $R$ is of the form (\ref{MatrixR}), then (\ref{garag}) holds
for any positive-definite diagonal $\Lambda$. We conclude that $M$ has to be
of the form $M=RUP$ where $R$ is of the form (\ref{MatrixR}). Since
$UP\in\operatorname*{Sp}(n)$, $M^{T}GM\in\operatorname*{Sp}(n)$ implies
$RGR\in\operatorname*{Sp}(n)$.

To proceed, for each pair $i,j$ with $1\leq i<j\leq n$ we choose the following
matrix $X$ in (\ref{glambda}):
\begin{equation}
X^{(ij)}=I+\tfrac{1}{2}E^{(ij)} \label{eq12}%
\end{equation}
where $E^{(ij)}$ is the symmetric matrix whose entries are all zero except the
ones on the $i$-th row and $j$-th column and on the $j$-th row and $i$-th
column which are equal to one. For instance if $n=4$, we have
\begin{equation}
X^{(13)}=\left(
\begin{array}
[c]{cccc}%
1 & 0 & \frac{1}{2} & 0\\
0 & 1 & 0 & 0\\
\frac{1}{2} & 0 & 1 & 0\\
0 & 0 & 0 & 1
\end{array}
\right)  . \label{eq13}%
\end{equation}
A simple calculation then reveals that
\begin{equation}
RGR=\left(
\begin{array}
[c]{cc}%
AX^{(ij)}A & 0\\
0 & D(X^{(ij)})^{-1}D
\end{array}
\right)  \label{eq10}%
\end{equation}
If we impose $RGR\in\operatorname*{Sp}(n)$, we obtain
\begin{equation}
AX^{(ij)}AD(X^{(ij)})^{-1}D=I\Longleftrightarrow X^{(ij)}AD=ADX^{(ij)}
\label{eq11}%
\end{equation}
In other words the matrix $AD$ commutes with every real positive-definite
$n\times n$ matrix $X^{(ij)}$ of the form (\ref{eq12}).

Let us write $AD=\operatorname{diag}(c_{1},\cdots,c_{n})$ with $c_{j}%
=a_{j}d_{j}$ for $1\leq$ $j\leq n$. Applying (\ref{eq11}) to (\ref{eq12}) for
$i<j$, we conclude that
\begin{equation}
c_{i}=c_{j}. \label{eq14}%
\end{equation}
This means that the entries of the matrix $AD$ are all equal, that is, either
$AD=I$ or $AD=-I$, or equivalently $A=D$ or $A=-D$. In the first case, $R$ is
symplectic and so is $M$. In the second case $R$ is anti-symplectic; but then
$M$ is also anti-symplectic.
\end{proof}

There is a very interesting link between Lemma \ref{Lemma} and symplectic
topology (in particular the notion of symplectic capacities of ellipsoids). In
fact it is proven in \cite{HZ} that the only linear mappings that preserve the
symplectic capacities \cite{Birkbis,HZ,Polter} of ellipsoids in the symplectic
space $(\mathbb{R}^{2n},\sigma)$ are either symplectic or antisymplectic.
Recall that the symplectic capacity of an ellipsoid $\Omega_{G}=\{z:Gz\cdot
z\leq1\}$ ($G$ a real symmetric positive-definite $2n\times2n$ matrix) can be
defined in terms of Gromov's width by%
\[
c(\Omega_{G})=\sup_{f\in\operatorname*{Symp}(n)}\{\pi r^{2}:f(B^{2n}%
(r))\subset\Omega_{G}\}
\]
where $B^{2n}(r)=\{z:|z|\leq r\}$ is the closed ball of radius $r$. The number
$c(\Omega_{G})$ is in practice calculated as follows: let $\lambda_{\max}$ be
the largest symplectic eigenvalue of $G$; then $c(\Omega_{G})=\pi
/\lambda_{\max}$. Now (\cite{HZ}, Theorem 5 p.61, and its Corollary, p.64)
assume that $f$ is a linear map $\mathbb{R}^{2n}\longrightarrow\mathbb{R}%
^{2n}$ such that $c(f(\Omega_{G}))=c(\Omega_{G})$ for all $G$. Then $f$ is
symplectic or antisymplectic. It turns out that our Lemma \ref{Lemma} yields a
sharper result: let us call symplectic ball the image of $B^{2n}(r)$ by an
element $S\in\operatorname*{Sp}(n)$. A symplectic ball $S(B^{2n}(r))$ is an
ellipsoid having symplectic capacity $c(S(B^{2n}(r)))=c(B^{2n}(r))=\pi r^{2}$. Then:

\begin{proposition}
\label{prop1}Let $k:\mathbb{R}^{2n}\longrightarrow\mathbb{R}^{2n}$ be a linear
automorphism taking any symplectic ball to a symplectic ball. Then $k$ is
either symplectic or antisymplectic.
\end{proposition}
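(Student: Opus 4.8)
The plan is to reduce the statement about symplectic balls to the hypothesis of Lemma~\ref{Lemma}. A symplectic ball is by definition $S(B^{2n}(r))$ for some $S\in\operatorname*{Sp}(n)$, and I want to express such a set as an ellipsoid $\Omega_G=\{z:Gz\cdot z\le 1\}$ with $G\in\operatorname*{Sp}^{+}(n)$. First I would observe that $z\in S(B^{2n}(r))$ iff $S^{-1}z\in B^{2n}(r)$, i.e. $|S^{-1}z|^2\le r^2$, which is $(S^{-1})^T S^{-1}z\cdot z\le r^2$. Writing $G=r^{-2}(S^{-1})^T S^{-1}=r^{-2}(SS^T)^{-1}$, this is exactly $\Omega_G$. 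The key point is that $G$ is symmetric positive definite and, since $S\in\operatorname*{Sp}(n)$ implies $S^{-1}\in\operatorname*{Sp}(n)$ and products/transposes/scalar multiples behave correctly, one checks $G\in\operatorname*{Sp}^{+}(n)$ (recalling from the excerpt that $\operatorname*{Sp}^{+}(n)$ is closed under the relevant operations; scalar factors $r^{-2}$ require a brief check but cause no difficulty since symplecticity is what matters and the capacity normalization already encodes $r$).

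The heart of the reduction is to identify which $G\in\operatorname*{Sp}^{+}(n)$ arise this way. Every element of $\operatorname*{Sp}^{+}(n)$ has, by the diagonalization~(\ref{gut}), the form $G=U^T\operatorname*{diag}(\Lambda,\Lambda^{-1})U$ with $U\in U(n)$; in particular the matrices $G=\operatorname*{diag}(X,X^{-1})$ of Lemma~\ref{Lemma}, and more generally all of $\operatorname*{Sp}^{+}(n)$, are of the ``symplectic ball'' type after absorbing the radius. Thus I would show that the ellipsoids $\Omega_G$ with $G\in\operatorname*{Sp}^{+}(n)$ are precisely (up to scaling) the symplectic balls: indeed $\Omega_G$ with $G\in\operatorname*{Sp}^{+}(n)$ equals $G^{-1/2}(B^{2n}(1))$, and $G^{-1/2}\in\operatorname*{Sp}^{+}(n)\subset\operatorname*{Sp}(n)$ since real powers of elements of $\operatorname*{Sp}^{+}(n)$ stay in $\operatorname*{Sp}^{+}(n)$ (stated in the excerpt). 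So the family of symplectic balls is exactly the family $\{\Omega_G:G\in\operatorname*{Sp}^{+}(n)\}$ together with scalings.

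Next I would translate the hypothesis ``$k$ takes symplectic balls to symplectic balls.'' Write $k=M$ as a matrix. If $\Omega_G$ is a symplectic ball and $M\Omega_G$ is again one, then $M\Omega_G=\Omega_{G'}$ for some $G'\in\operatorname*{Sp}^{+}(n)$ (up to radius). Now $z\in M\Omega_G$ iff $M^{-1}z\in\Omega_G$ iff $G(M^{-1}z)\cdot(M^{-1}z)\le 1$ iff $(M^{-1})^T G M^{-1}z\cdot z\le 1$, so $M\Omega_G=\Omega_{(M^{-1})^TGM^{-1}}$. Hence the hypothesis says $(M^{-1})^TGM^{-1}\in\operatorname*{Sp}^{+}(n)$ (up to a positive scalar) for every $G\in\operatorname*{Sp}^{+}(n)$, and in particular for every $G=\operatorname*{diag}(X,X^{-1})$ as in~(\ref{glambda}). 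Setting $N=M^{-1}$, this is exactly the hypothesis of Lemma~\ref{Lemma} applied to $N$: $N^TGN\in\operatorname*{Sp}(n)$ for all such $G$. Lemma~\ref{Lemma} then gives that $N=M^{-1}$ is symplectic or antisymplectic, and since the inverse of a symplectic (resp.\ antisymplectic) map is symplectic (resp.\ antisymplectic), so is $M=k$.

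The main obstacle I anticipate is bookkeeping the radius/scalar factors: a symplectic ball $S(B^{2n}(r))$ corresponds to $G=r^{-2}(SS^T)^{-1}$, which is not literally in $\operatorname*{Sp}^{+}(n)$ unless $r=1$, so I must be careful to formulate the matching condition ``$N^TGN$ is a positive scalar multiple of an element of $\operatorname*{Sp}^{+}(n)$'' and then verify that the scalar is forced to be compatible so that Lemma~\ref{Lemma}'s exact hypothesis $N^TGN\in\operatorname*{Sp}(n)$ applies. Concretely, since symplectic balls all have the same capacity $\pi r^2$ as their generating ball and $\operatorname*{Sp}^{+}(n)$ consists of \emph{unimodular} matrices, comparing determinants (every symplectic matrix has determinant $1$) pins down the scalar to $1$; I would spell this determinant comparison out to close the gap cleanly.
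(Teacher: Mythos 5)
Your reduction is the same as the paper's: describe the symplectic ball $S(B^{2n}(r))$ as $\{z:Gz\cdot z\leq1\}$ with $G=r^{-2}(SS^{T})^{-1}$, observe that the image under $k$ is $\{z:(K^{-1})^{T}GK^{-1}z\cdot z\leq1\}$, and feed the resulting condition into Lemma \ref{Lemma} with $M=K^{-1}$. Your identification of the radius-one symplectic balls with $\{\Omega_{G}:G\in\operatorname*{Sp}\nolimits^{+}(n)\}$ via $\Omega_{G}=G^{-1/2}(B^{2n}(1))$ is a correct and slightly more explicit version of what the paper does.

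The one point where you go beyond the paper --- the scalar bookkeeping --- is also the one step that does not work as you propose. If $(K^{-1})^{T}GK^{-1}=\lambda G'$ with $G,G'\in\operatorname*{Sp}\nolimits^{+}(n)$ and $\lambda>0$, taking determinants gives $(\det K)^{-2}=\lambda^{2n}$, i.e.\ $\lambda=|\det K|^{-1/n}$: the scalar is indeed the same constant for every $G$, but it equals $1$ only if $|\det K|=1$, and the determinant comparison cannot supply that. In fact it cannot be supplied at all from the hypothesis read literally: the dilation $k=\mu I$ with $\mu>0$, $\mu\neq1$, sends $S(B^{2n}(r))$ to $S(B^{2n}(\mu r))$, hence maps every symplectic ball to a symplectic ball, yet is neither symplectic nor antisymplectic. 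What is actually needed --- and what the paper's own proof tacitly assumes when it asserts $G=r^{-2}(S^{T})^{-1}S^{-1}\in\operatorname*{Sp}\nolimits^{+}(n)$ and requires $(K^{-1})^{T}GK^{-1}\in\operatorname*{Sp}\nolimits^{+}(n)$ with no scalar --- is the hypothesis as phrased in the Introduction: $k$ preserves the symplectic capacity of symplectic balls, i.e.\ sends a symplectic ball of radius $r$ to one of the same radius. Under that reading the scalar is $1$ by hypothesis, Lemma \ref{Lemma} applies to $M=K^{-1}$ exactly as you say, and the conclusion follows since the inverse of a symplectic (resp.\ antisymplectic) map is symplectic (resp.\ antisymplectic). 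So: same route as the paper, correct modulo this normalization, but your determinant argument should be replaced by an appeal to capacity preservation rather than presented as a derivation of it.
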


\begin{proof}
The symplectic ball $S(B^{2n}(r))$ is defined by the inequality $Gz\cdot
z\leq1$ where $G=(1/r^{2})(S^{T})^{-1}S^{-1}\in\operatorname*{Sp}%
\nolimits^{+}(n)$. Let $K$ be the matrix of $k$ in the canonical basis; we
have%
\[
k(S(B^{2n}(r)))=\{z:(K^{-1})^{T}GK^{-1}z\cdot z\leq1\}
\]
hence $k(S(B^{2n}(r)))$ is a symplectic ball if and only if $(K^{-1}%
)^{T}GK^{-1}\in\operatorname*{Sp}\nolimits^{+}(n)$. The proof now follows from
Lemma \ref{Lemma} with $M=K^{-1}$.
\end{proof}

\section{The main result}

Let us now prove our main result:

\begin{theorem}
\label{Theo1}Let $M\in GL(2n,\mathbb{R})$.

(i) Assume that $M$ is antisymplectic: $S=CM\in\operatorname*{Sp}(n)$ where
$C=%
\begin{pmatrix}
I & 0\\
0 & -I
\end{pmatrix}
$; then for every $\psi\in\mathcal{S}^{\prime}(\mathbb{R}^{n})$
\begin{equation}
W\psi(Mz)=W(\widehat{S}^{-1}\overline{\psi})(z) \label{mc}%
\end{equation}
where $\widehat{S}$ is any of the two elements of $\operatorname*{Mp}(n)$
covering $S$.

(ii) Conversely, assume that for any $\psi\in\mathcal{S}(\mathbb{R}^{n})$
there exists $\psi^{\prime}\in\mathcal{S}^{\prime}(\mathbb{R}^{n})$ such that%
\begin{equation}
W\psi(Mz)=W\psi^{\prime}(z). \label{mcond}%
\end{equation}
Then $M$ is either symplectic or antisymplectic.
\end{theorem}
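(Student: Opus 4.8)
The plan is to prove Theorem \ref{Theo1} in two parts, establishing the antisymplectic covariance identity (i) by direct computation and then deriving the converse (ii) as a consequence of Lemma \ref{Lemma}.

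For part (i), I would start from the defining integral (\ref{wigdef}) and work out how $W\psi$ transforms under the reflection $C:(x,p)\mapsto(x,-p)$. A direct change of variables in the integral shows that $W\psi(Cz)=W\overline{\psi}(z)$, since conjugating $\psi$ and flipping the sign of $p$ have matching effects on the oscillatory kernel $e^{-\frac{i}{\hbar}p\cdot y}$. Writing $M=C^{-1}S=CS$ (using $C^2=I$) with $S\in\operatorname*{Sp}(n)$, I would then combine this reflection identity with the standard metaplectic covariance (\ref{wigco2}), namely $W\psi\circ S=W(\widehat{S}^{-1}\psi)$, to obtain $W\psi(Mz)=W\psi(CSz)=W\overline{\psi}(Sz)=W(\widehat{S}^{-1}\overline{\psi})(z)$, which is exactly (\ref{mc}). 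This part is essentially a bookkeeping exercise juxtaposing two known covariance facts.

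For part (ii), the strategy is to reduce the hypothesis (\ref{mcond}) to the algebraic condition of Lemma \ref{Lemma}. The key point is that the Wigner transform intertwines with the integral representation of Weyl operators, so the existence of $\psi'$ with $W\psi\circ M=W\psi'$ for every $\psi$ forces a rigid constraint on $M$. Concretely, I would test the hypothesis on a family of Gaussian states: take $\psi_G$ to be the (generalized) Gaussian whose Wigner transform is (up to normalization) $e^{-\frac{1}{\hbar}Gz\cdot z}$ for $G\in\operatorname*{Sp}^+(n)$, using that such Gaussians are exactly the pure states whose Wigner functions are positive Gaussians. Then $W\psi_G(Mz)$ is the Gaussian with matrix $M^TGM$, and requiring this to again be a genuine Wigner transform of some state in $\mathcal{S}'$ forces $M^TGM$ to remain a symplectic positive-definite matrix—because a Gaussian $e^{-\frac{1}{\hbar}Nz\cdot z}$ is the Wigner transform of a pure state precisely when $N\in\operatorname*{Sp}^+(n)$ (equivalently the associated covariance saturates the uncertainty principle). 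Applying this for all $G$ of the form (\ref{glambda}) gives exactly the hypothesis $M^TGM\in\operatorname*{Sp}(n)$ of Lemma \ref{Lemma}, whence $M$ is symplectic or antisymplectic.

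The main obstacle will be the characterization step: justifying rigorously that $W\psi'$ being a Gaussian of matrix $N$ compels $N\in\operatorname*{Sp}^+(n)$, rather than merely being positive-definite. The delicate content is the purity/uncertainty criterion for Gaussian Wigner functions—one must invoke that the only Gaussians arising as $W\psi'$ for $\psi'\in\mathcal{S}'(\mathbb{R}^n)$ are those whose matrix lies in $\operatorname*{Sp}^+(n)$ (via Williamson diagonalization (\ref{william}), the symplectic eigenvalues must all equal the critical value, here normalized to $1$). I would set this up by first reducing to the case $G=I$ to fix the overall scaling of $M$ (as in the opening of Lemma \ref{Lemma}'s proof), and then feeding the diagonal family (\ref{glambda}) into the Gaussian test. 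Once the reduction to $M^TGM\in\operatorname*{Sp}(n)$ is secured, Lemma \ref{Lemma} finishes the argument immediately.
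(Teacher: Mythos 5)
Your proposal is correct and follows essentially the same route as the paper: part (i) by composing the reflection identity $W\psi\circ C=W\overline{\psi}$ with metaplectic covariance, and part (ii) by testing (\ref{mcond}) on Gaussians with Wigner matrix $G$ of the form (\ref{glambda}) and reducing to Lemma \ref{Lemma}. The one step you flag as the ``main obstacle''---that a Gaussian $e^{-\frac{1}{\hbar}Nz\cdot z}$ is a Wigner transform only if $N\in\operatorname*{Sp}^{+}(n)$---is exactly what the paper supplies, via Hudson's theorem (positivity forces $\psi'$ to be a Gaussian), the explicit Wigner matrix of a general Gaussian $\psi_{X,Y}$, a parity argument, and Williamson diagonalization forcing $\Sigma=I$.
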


\begin{proof}
(i) It is sufficient to assume that $\psi\in\mathcal{S}(\mathbb{R}^{n})$. We
have%
\begin{align*}
W\psi(Cz)  &  =\left(  \tfrac{1}{2\pi\hbar}\right)  ^{n}\int_{\mathbb{R}^{n}%
}e^{\frac{i}{\hbar}p\cdot y}\psi(x+\tfrac{1}{2}y)\overline{\psi}(x-\tfrac
{1}{2}y)dy\\
&  =\left(  \tfrac{1}{2\pi\hbar}\right)  ^{n}\int_{\mathbb{R}^{n}}e^{-\frac
{i}{\hbar}p\cdot y}\psi(x-\tfrac{1}{2}y)\overline{\psi}(x+\tfrac{1}{2}y)dy\\
&  =W\overline{\psi}(z).
\end{align*}
It follows that
\[
W\psi(Mz)=W\psi(CSz)=W\overline{\psi}(Sz)
\]
hence formula (\ref{mc}). (ii) Choosing for $\psi$ a Gaussian of the form
\begin{equation}
\psi_{X}(x)=\left(  \tfrac{1}{\pi\hbar}\right)  ^{n/4}(\det X)^{1/4}%
e^{-\tfrac{1}{2\hbar}Xx\cdot x} \label{gausshud}%
\end{equation}
($X$ is real symmetric and positive definite) we have
\begin{equation}
W\psi_{X}(z)=\left(  \tfrac{1}{\pi\hbar}\right)  ^{n}e^{-\tfrac{1}{\hbar
}Gz\cdot z} \label{phagauss}%
\end{equation}
where%
\begin{equation}
G=%
\begin{pmatrix}
X & 0\\
0 & X^{-1}%
\end{pmatrix}
\label{gsym}%
\end{equation}
is positive definite and belongs to $\operatorname*{Sp}(n)$. Condition
(\ref{mcond}) implies that we must have%
\[
W\psi^{\prime}(z)=\left(  \tfrac{1}{\pi\hbar}\right)  ^{n}e^{-\tfrac{1}{\hbar
}M^{T}GMz\cdot z}.
\]
The Wigner transform of a function being a Gaussian if and only if the
function itself is a Gaussian (see \cite{Birk,Birkbis}). This can be seen in
the following way: the matrix $M^{T}GM$ being symmetric and positive definite
we can use a Williamson diagonalization (\ref{william}): there exists
$S\in\operatorname*{Sp}(n)$ such that%
\begin{equation}
S^{T}(M^{T}GM)S=\Delta=%
\begin{pmatrix}
\Sigma & 0\\
0 & \Sigma
\end{pmatrix}
\text{\ } \label{sms}%
\end{equation}
and hence%
\[
W\psi^{\prime}(Sz)=\left(  \tfrac{1}{\pi\hbar}\right)  ^{n}e^{-\tfrac{1}%
{\hbar}\Delta z\cdot z}.
\]
In view of the symplectic covariance of the Wigner transform, we have
\[
W\psi^{\prime}(Sz)=W\psi^{\prime\prime}(z)\text{ \ , \ }\psi^{\prime\prime
}=\widehat{S}^{-1}\psi^{\prime}%
\]
where $\widehat{S}\in\operatorname*{Mp}(n)$ is one of the two elements of the
metaplectic group covering $S$. We now show that the equality%
\begin{equation}
W\psi^{\prime\prime}(z)=\left(  \tfrac{1}{\pi\hbar}\right)  ^{n}e^{-\tfrac
{1}{\hbar}\Delta z\cdot z} \label{delta}%
\end{equation}
implies that $\psi^{\prime\prime}$ must be a Gaussian of the form
(\ref{gausshud}) and hence $W\psi^{\prime\prime}$ must be of the type
(\ref{phagauss},\ref{gsym}). That $\psi^{\prime\prime}$ must be a Gaussian
follows from $W\psi^{\prime\prime}\geq0$ and Hudson's theorem (see
\textit{e.g.} \cite{Folland}). If $\psi^{\prime\prime}$ were of the more
general type
\begin{equation}
\psi_{X,Y}(x)=\left(  \tfrac{1}{\pi\hbar}\right)  ^{n/4}(\det X)^{1/4}%
e^{-\tfrac{1}{2\hbar}(X+iY)x\cdot x} \label{psixy}%
\end{equation}
($X,Y$ are real and symmetric and $X$ is positive definite) the matrix $G$ in
(\ref{phagauss}) would be
\begin{equation}
G=%
\begin{pmatrix}
X+YX^{-1}Y & YX^{-1}\\
X^{-1}Y & X^{-1}%
\end{pmatrix}
\end{equation}
which is only compatible with (\ref{delta}) if $Y=0$. In addition, due to the
parity of $W\psi^{\prime\prime}$, $\psi^{\prime\prime}$ must be even hence
Gaussians more general than $\psi_{X,Y}$ are excluded. It follows from these
considerations that we have%
\[
\Delta=%
\begin{pmatrix}
\Sigma & 0\\
0 & \Sigma
\end{pmatrix}
=%
\begin{pmatrix}
X & 0\\
0 & X^{-1}%
\end{pmatrix}
\]
so that $\Sigma=\Sigma^{-1}$. Since $\Sigma>0$ this implies that we must have
$\Sigma=I$, and hence, using formula (\ref{sms}), $S^{T}(M^{T}GM)S=I$. It
follows that we must have $M^{T}GM\in\operatorname*{Sp}(n)$ for every $G=%
\begin{pmatrix}
X & 0\\
0 & X^{-1}%
\end{pmatrix}
\in\operatorname*{Sp}^{+}(n)$. In view of Lemma \ref{Lemma} the matrix $M$
must then be either symplectic or antisymplectic.
\end{proof}

\begin{remark}
An alternative way of proving that (\ref{delta}) implies that $\Sigma=I$ is to
use the formulation of Hardy's uncertainty principle \cite{ha33} for Wigner
transforms introduced in \cite{hardy} (see \cite{Birkbis}, Theorem 105, for a
detailed study).
\end{remark}

\section{Application to Weyl Operators}

\subsection{The Weyl correspondence}

Let $a\in\mathcal{S}(\mathbb{R}^{2n})$; the operator $\widehat{A}$ defined for
all $\psi\in\mathcal{S}(\mathbb{R}^{n})$ by
\begin{equation}
\widehat{A}\psi(x)=\left(  \tfrac{1}{2\pi\hbar}\right)  ^{n}\iint
\nolimits_{\mathbb{R}^{2n}}e^{\frac{i}{\hbar}p\cdot(x-y)}a(\tfrac{1}%
{2}(x+y),p)\psi(y)dydp \label{tallyweyl}%
\end{equation}
is called the Weyl operator with symbol $a$. For more general symbols
$a\in\mathcal{S}^{\prime}(\mathbb{R}^{2n})$ one can define $\widehat{A} \psi$
in a variety of ways \cite{Folland,Birk}; we will see one below. The Weyl
correspondence $a\overset{\text{Weyl}}{\longleftrightarrow} \widehat{A}$ is
linear and one-to-one: If $a\overset{\text{Weyl}}{\longleftrightarrow}%
\widehat{A}$ and $a^{\prime}\overset{\text{Weyl}}{\longleftrightarrow}
\widehat{A}$ then $a=a^{\prime}$, and we have $1\overset{\text{Weyl}%
}{\longleftrightarrow} I$ where $I$ is the identity operator on $\mathcal{S}%
^{\prime}(\mathbb{R}^{n})$.

There is a fundamental relation between Weyl operators and the cross-Wigner
transform, which is a straightforward generalization of the Wigner transform
\cite{Wigner}: it is defined, for $\psi,\phi\in\mathcal{S}(\mathbb{R}^{n})$ by%
\[
W(\psi,\phi)(z)=\left(  \tfrac{1}{2\pi\hbar}\right)  ^{n}\int_{\mathbb{R}^{n}%
}e^{-\frac{i}{\hbar}p\cdot y}\psi(x+\tfrac{1}{2}y)\overline{\phi}(x-\tfrac
{1}{2}y)dy
\]
(in particular $W(\psi,\psi)=W\psi$). In fact, if $\widehat{A}\overset
{\mathrm{Weyl}}{\longleftrightarrow}a$ then%
\begin{equation}
\langle\widehat{A}\psi,\overline{\phi}\rangle=\langle\langle a,W(\psi
,\phi)\rangle\rangle\label{aw}%
\end{equation}
where $\langle\cdot,\cdot\rangle$ is the distributional bracket on
$\mathbb{R}^{n}$ and $\langle\langle\cdot,\cdot\rangle\rangle$ that on
$\mathbb{R}^{2n}$; the latter pairs distributions $\Psi\in\mathcal{S}^{\prime
}(\mathbb{R}^{2n})$ and Schwartz functions $\Phi\in\mathcal{S}(\mathbb{R}%
^{2n})$; when $\Psi\in L^{2}(\mathbb{R}^{2n})$ we thus have%
\[
\langle\langle\Psi,\Phi\rangle\rangle=%
{\displaystyle\int\nolimits_{\mathbb{R}^{2n}}}
\Psi(z)\Phi(z)dz.
\]
This relation can actually be taken as a concise definition of an arbitrary
Weyl operator $\widehat{A}:\mathcal{S}(\mathbb{R}^{n})\longrightarrow
\mathcal{S}^{\prime}(\mathbb{R}^{n})$; for $\psi,\phi\in\mathcal{S}%
(\mathbb{R}^{n})$ we have $W(\psi,\phi)\in\mathcal{S}(\mathbb{R}^{2n})$ the
right-hand side is defined for arbitrary $a\in\mathcal{S}^{\prime}%
(\mathbb{R}^{2n})$ and this defines unambiguously $\widehat{A}\psi$ since
$\phi$ is arbitrary. The symplectic covariance property for Weyl operators%
\begin{equation}
\widehat{S}^{-1}\widehat{A}\widehat{S}\overset{\mathrm{Weyl}}%
{\longleftrightarrow}a\circ S \label{sas}%
\end{equation}
actually easily follows: since
\begin{equation}
W(\widehat{S}\psi,\widehat{S}\phi)(z)=W(\psi,\phi)(S^{-1}z) \label{wpf}%
\end{equation}
we have%
\begin{align*}
\langle\langle a\circ S,W(\psi,\phi)\rangle\rangle &  =\langle\langle
a,W(\psi,\phi)\circ S^{-1}\rangle\rangle\\
&  =\langle\langle a,W(\widehat{S}\psi,\widehat{S}\phi)\rangle\rangle\\
&  =\langle\widehat{A}\widehat{S}\psi,\overline{\widehat{S}\phi}\rangle\\
&  =\langle\widehat{S}^{-1}\widehat{A}\widehat{S}\psi,\overline{\phi}\rangle
\end{align*}
which proves the covariance relation (\ref{sas}).

\subsection{Maximal covariance of Weyl operators}

Theorem \ref{Theo1} implies the following maximal covariance result for Weyl operators:

\begin{corollary}
\label{coro1}Let $M\in GL(2n,\mathbb{R})$. Assume that there exists a unitary
operator $\widehat{M}:L^{2}(\mathbb{R}^{n})\longrightarrow L^{2}%
(\mathbb{R}^{n})$ such that $\widehat{M}\widehat{A}\widehat{M}^{-1}%
\overset{\mathrm{Weyl}}{\longleftrightarrow}a\circ M^{-1}$ for all
$\widehat{A}\overset{\mathrm{Weyl}}{\longleftrightarrow}a\in\mathcal{S}%
(\mathbb{R}^{2n})$. Then $M$ is symplectic or antisymplectic.
\end{corollary}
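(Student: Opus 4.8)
The plan is to deduce Corollary \ref{coro1} from Theorem \ref{Theo1}(ii) by converting the assumed Weyl-operator covariance into a covariance statement for the (cross-)Wigner transform. The bridge is the fundamental identity (\ref{aw}), $\langle\widehat{A}\psi,\overline{\phi}\rangle=\langle\langle a,W(\psi,\phi)\rangle\rangle$, which I will read in both directions: it lets me replace brackets of operators by brackets of symbols against cross-Wigner transforms, and conversely.

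First I would compute $\langle\widehat{M}\widehat{A}\widehat{M}^{-1}\psi,\overline{\phi}\rangle$ in two ways. Using the hypothesis that $\widehat{M}\widehat{A}\widehat{M}^{-1}$ has Weyl symbol $a\circ M^{-1}$, identity (\ref{aw}) gives $\langle\langle a\circ M^{-1},W(\psi,\phi)\rangle\rangle$. Using instead that $\widehat{M}$ is unitary, so that $\widehat{M}^{-1}$ is its adjoint, together with (\ref{aw}) applied to $\widehat{A}$ itself, I would rewrite the same quantity as $\langle\widehat{A}(\widehat{M}^{-1}\psi),\overline{\widehat{M}^{-1}\phi}\rangle=\langle\langle a,W(\widehat{M}^{-1}\psi,\widehat{M}^{-1}\phi)\rangle\rangle$. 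Here the conjugation sitting on the second slot of the bracket matches precisely the conjugation built into the cross-Wigner transform, which is what makes the adjoint come out cleanly.

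Next I would change variables $z\mapsto Mz$ in the pairing $\langle\langle a\circ M^{-1},W(\psi,\phi)\rangle\rangle$, producing a Jacobian factor and turning it into $|\det M|\,\langle\langle a,W(\psi,\phi)\circ M\rangle\rangle$. Equating the two expressions for arbitrary $a\in\mathcal{S}(\mathbb{R}^{2n})$ forces the identity of tempered distributions
\[
W(\widehat{M}^{-1}\psi,\widehat{M}^{-1}\phi)(z)=|\det M|\,W(\psi,\phi)(Mz).
\]
Setting $\phi=\psi$ gives $W(\widehat{M}^{-1}\psi)(z)=|\det M|\,W\psi(Mz)$. Since the Wigner transform is quadratic, I absorb the constant by setting $\psi'=|\det M|^{-1/2}\widehat{M}^{-1}\psi\in L^{2}(\mathbb{R}^{n})\subset\mathcal{S}'(\mathbb{R}^{n})$, obtaining $W\psi(Mz)=W\psi'(z)$ for every $\psi\in\mathcal{S}(\mathbb{R}^{n})$. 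This is exactly hypothesis (\ref{mcond}), so Theorem \ref{Theo1}(ii) yields that $M$ is symplectic or antisymplectic.

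The delicate part is the bookkeeping around the pairings rather than any hard analysis: one must keep straight the bilinear distributional bracket versus the Hermitian $L^{2}$ inner product, verify that unitarity is invoked correctly in the presence of the conjugate $\overline{\phi}$, and justify that equality of all pairings against $a\in\mathcal{S}(\mathbb{R}^{2n})$ promotes to genuine equality of the cross-Wigner transforms. The factor $|\det M|$ must be tracked carefully so that the resulting $\psi'$ is legitimately an element of $\mathcal{S}'(\mathbb{R}^{n})$; it is in fact harmless, since a trace computation shows that $|\det M|=1$ is already forced by the hypothesis.
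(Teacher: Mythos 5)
Your proposal is correct and follows essentially the same route as the paper: compute $\langle\widehat{M}\widehat{A}\widehat{M}^{-1}\psi,\overline{\phi}\rangle$ two ways via (\ref{aw}) and unitarity, equate the pairings for arbitrary $a$ to get a Wigner covariance identity, set $\phi=\psi$, and invoke Theorem \ref{Theo1}(ii). The only difference is that you track the Jacobian factor $|\det M|$ in the change of variables (which the paper's own proof silently drops) and absorb it by rescaling $\psi'$ --- a legitimate refinement that does not change the argument.
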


\begin{proof}
Suppose that $\widehat{M}\widehat{A}\widehat{M}^{-1}\overset{\mathrm{Weyl}%
}{\longleftrightarrow}a\circ M^{-1}$; then, by (\ref{aw}),%
\begin{align*}
(\widehat{M}\widehat{A}\widehat{M}^{-1}\psi|\phi)_{L^{2}}  &  =\langle\langle
a\circ M^{-1},W(\psi,\phi)\rangle\rangle\\
&  =\langle\langle a,W(\psi,\phi)\circ M\rangle\rangle.
\end{align*}
On the other hand, using the unitarity of $\widehat{M}$ and (\ref{aw}),%
\begin{align*}
(\widehat{M}\widehat{A}\widehat{M}^{-1}\psi|\phi)_{L^{2}}  &  =(\widehat
{A}\widehat{M}^{-1}\psi|\widehat{M}^{-1}\phi)_{L^{2}}\\
&  =\langle\langle a,W(\widehat{M}^{-1}\psi,\widehat{M}^{-1}\phi
)\rangle\rangle.
\end{align*}
It follows that we must have
\[
\langle\langle a,W(\psi,\phi)\circ M\rangle\rangle=\langle\langle
a,W(\widehat{M}^{-1}\psi,\widehat{M}^{-1}\phi)\rangle\rangle
\]
for all $\psi,\phi\in\mathcal{S}(\mathbb{R}^{n})$ and hence, in particular,
taking $\psi=\phi$:%
\[
\langle\langle a,W\psi\circ M\rangle\rangle=\langle\langle a,W(\widehat
{M}^{-1}\psi)\rangle\rangle
\]
for all $\psi\in\mathcal{S}(\mathbb{R}^{n})$. Since $a$ is arbitrary this
implies that we must have $W\psi\circ M=W(\widehat{M}^{-1}\psi)$. In view of
Theorem \ref{Theo1} the automorphism $M$ must be symplectic or antisymplectic.
\end{proof}

\section{Discussion and Concluding Remarks}

The results above, together with those in \cite{JPDOA}, where it was proved
that one cannot expect a covariance formula for non-linear symplectomorphisms,
show that the symplectic group indeed is a maximal linear covariance group for
both Wigner transforms and general Weyl pseudo-differential operators. As
briefly mentioned in the Introduction, one can prove \cite{JPDOA,transam}
partial symplectic covariance results for other classes of pseudo-differential
operators (Shubin, or Born--Jordan operators). Corollary \ref{coro1} proves
that one cannot expect to extend these results to more general linear
non-symplectic automorphisms.

The link between Lemma \ref{Lemma}, its consequence, Proposition \ref{prop1},
and the notion of symplectic capacity of ellipsoids in the symplectic space
$(\mathbb{R}^{2n},\sigma)$ is not after all so surprising: as has been shown
in \cite{Birkbis,hardy,physreps} there is a deep and certainly essential
interplay between Weyl calculus, the theory of Wigner transforms, the
uncertainty principle, and Gromov's non-squeezing theorem \cite{Gromov}. For
instance, the methods used in this paper can be used to show that the
uncertainty principle in its strong Robertson--Schr\"{o}dinger form
\cite{physreps} is only invariant under symplectic or antisymplectic transforms.

\begin{acknowledgement}
Nuno Costa Dias and Jo\~ao Nuno Prata have been supported by the research
grant PTDC/MAT/099880/2008 of the Portuguese Science Foundation. Maurice de
Gosson has been supported by a research grant from the Austrian Research
Agency FWF (Projektnummer P23902-N13).
\end{acknowledgement}

\begin{acknowledgement}
The authors wish to thank the referee for useful remarks and suggestions, and
for having pointed out some misprints.
\end{acknowledgement}

\pagebreak

\begin{thebibliography}{99}                                                                                               %


\bibitem {Dirac}P.A.M. Dirac. The Principles of Quantum Mechanics, Oxford
Science Publications, 4th revised edition, 1999

\bibitem {dragt}A.J. Dragt and S. Habib. How Wigner Functions Transform Under
Symplectic Maps, arXiv:quant-ph/9806056v (1998)

\bibitem {Folland}G.B. Folland. Harmonic Analysis in Phase space, Annals of
Mathematics studies, Princeton University Press, Princeton, N.J. (1989)

\bibitem {Birk}M. de Gosson. Symplectic Geometry and Quantum Mechanics,
Birkh\"{a}user, Basel, series \textquotedblleft Operator Theory: Advances and
Applications\textquotedblright\ (subseries: \textquotedblleft Advances in
Partial Differential Equations\textquotedblright), Vol. 166 (2006)

\bibitem {Birkbis}M. de Gosson. Symplectic Methods in Harmonic Analysis and in
Mathematical Physics. Birkh\"{a}user; Springer Basel (2011)

\bibitem {JPDOA}M. de Gosson. On the Transformation Properties of the Wigner
Function Under Hamiltonian Symplectomorphisms. J. Pseudo-Differ. Oper. Appl.
2, no. 1, 91--99 (2011)

\bibitem {transam}M. de Gosson. Symplectic Covariance Properties for Shubin
and Born--Jordan Pseudo-Differential Operators. Trans. Amer. Math. Soc. 365,
3287--3307 (2013)

\bibitem {hardy}M. de Gosson and F. Luef. Quantum States and Hardy's
Formulation of the Uncertainty Principle: a Symplectic Approach. Lett. Math.
Phys., 80, 69--82, 2007

\bibitem {physreps}M. de Gosson and F. Luef. Symplectic capacities and the
geometry of uncertainty: the irruption of symplectic topology in classical and
quantum mechanics. Phys. Rep. 484, no. 5, 131--179 (2009)

\bibitem {Gromov}M. Gromov. Pseudoholomorphic curves in symplectic manifolds,
Invent. Math. 82, 307--347 (1985)

\bibitem {ha33}G.H.\ Hardy. A theorem concerning Fourier transforms. J.
London. Math. Soc. 8 (1933) 227--231.

\bibitem {HZ}H. Hofer and E. Zehnder. Symplectic Invariants and Hamiltonian
Dynamics. Birkh\"{a}user Advanced texts, Basler Lehrb\"{u}cher, Birkh\"{a}user
Verlag, 1994

\bibitem {Leray}J. Leray. Lagrangian Analysis and Quantum Mechanics,\ a
mathematical structure related to asymptotic expansions and the Maslov index
(the MIT Press, Cambridge, Mass., 1981); translated from Analyse Lagrangienne
RCP 25, Strasbourg Coll\`{e}ge de France, 1976--1977

\bibitem {Polter}L. Polterovich. The Geometry of the Group of Symplectic
Diffeomorphisms. Lectures in Mathematics, Birkh\"{a}user, 2001)

\bibitem {sh87}M.A. Shubin. Pseudodifferential Operators and Spectral Theory,
Springer--Verlag, 1987 [original Russian edition in Nauka, Moskva, 1978

\bibitem {Stein}E.M. Stein. Harmonic Analysis: Real Variable Methods,
Orthogonality, and Oscillatory Integrals. Princeton University Press, 1993

\bibitem {Wigner}E. Wigner. On the quantum correction for thermodynamic
equilibrium, Phys. Rev. 40, 799--755 (1932)

\bibitem {Will}J. Williamson. On the algebraic problem concerning the normal
forms of linear dynamical systems, Amer. J. of Math. 58, 141--163, 1936.

\bibitem {Wong}M.W. Wong. Weyl Transforms. Springer, 1998\bigskip
\end{thebibliography}
\end{document}